\newtheorem{theorem}{Theorem}[section]
\newtheorem{lemma}[theorem]{Lemma}
\theoremstyle{definition}
\theoremstyle{remark}
\numberwithin{equation}{section}
\newcommand{\zR}{\mathbb R}
\newcommand{\zE}{\mathbb E}
\newcommand{\ve}{\varepsilon}
\begin{document}

% \title[short text for running head]{full title}
\title[Orthant Probabilities and maxima on a vertex of a simplex]{Orthant Probabilities and the attainment of \\ maxima on a vertex of a simplex}

%    Only \author and \address are required; other information is
%    optional.  Remove any unused author tags.

%    author one information
% \author[short version for running head]{name for top of paper}
\author{Dami\'an Pinasco}
\address{Universidad Torcuato Di Tella Av. Figueroa Alcorta 7350 (C1428BCW) Buenos
Aires, Argentina and CONICET}
%\curraddr{}
\email{dpinasco@utdt.edu}
%
%%    author two information
\author{Ezequiel Smucler}
\address{Universidad Torcuato Di Tella Av. Figueroa Alcorta 7350 (C1428BCW) Buenos
Aires, Argentina and CONICET}
%\curraddr{}
\email{esmucler@utdt.edu}
%
%%    author three information
\author{Ignacio Zalduendo}
\address{Universidad Torcuato Di Tella Av. Figueroa Alcorta 7350 (C1428BCW) Buenos
Aires, Argentina and CONICET}
%\curraddr{}
\email{izalduendo@utdt.edu}
\thanks{Supported partially by PIP 112 201301 00422 CO (CONICET - Argentina).}

\keywords{Homogeneous Polynomial, Orthant Probabilities, Simplex}

\subjclass[2010]{26D05, 46G25, 62H05}

\date{}

\dedicatory{}

%    "Communicated by" -- provide editor's name; required.
\commby{}

%    Abstract is required.
\begin{abstract}
We calculate bounds for orthant probabilities for the equicorrelated multivariate normal distribution and use these bounds to
show the following: for degree $k>4$, the probability that a $k$-homogeneous polynomial in $n$ variables attains a relative maximum on a
vertex of the $n$-dimensional simplex tends to one as the dimension $n$ grows. The bounds we obtain for the orthant probabilities are tight up to $\log(n)$ factors.
\end{abstract}

\maketitle
\section*{Introduction}\label{sec1}

A number of papers have appeared addressing the question of the probability that an $n$-variable polynomial $P: \zR^n \longrightarrow \zR$ attain relative maxima on
specific points (\cite{CZ}, \cite{PGV}, \cite{PZ1}, \cite{PZ2}). It has been proved for instance, that for degree $k>2$, the probability that a $k$-homogeneous polynomial
in $n$ variables attain a relative maxima on some vertex of the unit sphere of $\ell_1^n$ tends to one as $n$ increases \cite{PZ1}, and that this is false for $k=2$ \cite{PGV}.

To consider such problems a probability measure is defined on the space of $k$-homogeneous $n$-variable polynomials $\mathcal{P}^k(\zR^n)$ (we recall the necessary constructions in section 1).
Functions $X:\mathcal{P}^k(\zR^n) \longrightarrow \zR$ are then random variables and the questions regarding relative maxima may be set in terms of inequalities involving some of these random variables.

In this paper we consider the case of relative maxima attained on a vertex of the zero-centered simplex. This translates into a classical problem in Statistics, that of computing
{\it orthant probabilities}: let $X_1, \ldots , X_n$ be jointly normally distributed random variables with mean $\mu=0$, standard deviation $\sigma=1$, and $\zE(X_i X_j)=\rho_{ij}$;
then what is the probability that they be simultaneously positive?
Sheppard \cite{Sheppard} proved that $P(X_1 >0, X_2>0 )=1/4 + (2\pi)^{-1}\arcsin(\rho_{12})$. David \cite{David} develops a recursive formula that allows one to compute $P(X_1 >0, \dots,X_n>0 )$ for $n$ odd as a function of orthant probabilities of lower dimension. Applying this formula to $n=3$ yields $P(X_1 >0, X_2>0, X_3>0 )=1/8+1/(4\pi) \left(\arcsin(\rho_{12})+\arcsin(\rho_{13})+\arcsin(\rho_{23}) \right)$. No closed form formulas for $n\geq 4$ are known. Series expansions for the general case were derived in \cite{Ruben}. Steck \cite{Steck} studies the special case in which $\zE(X_i X_j)=\rho$, which is the setting of interest in the present work, and lists a number of interesting properties of the function $f(n,\rho)=P(X_1 >0, \dots,X_n>0 )$, including the identity $f(n,\rho)= \mathbb{E}\left[ \Phi^{n}(Z\sqrt{\rho/(1-\rho)}) \right]$, where $Z$ is a standard normal random variable and $\Phi$ is its cumulative distribution function. In \cite{PZ2} there is a formula for $f(n,\rho)$ in terms of the gaussian measure of a simplex. A more thorough review of results regarding orthant probabilities can be found in \cite{Owen}. In this paper, we will show that for $\rho>0$ and large $n$,  $f(n,\rho)$ behaves essentially like $n^{1-1/\rho}$, except for multiplicative $\log(n)$ factors.

A central issue pertaining to the attainment of maxima at several different vertices is the independence or non-independence of the random variables involved.
This accounts for the essential difference in the case of $k>2$ or $k=2$ mentioned above for the unit sphere of $\ell_1^n$. It is also the main difficulty
in the study of the analogous problem on the unit sphere of $\ell_\infty^n$. As we will show in section 2, the particular geometry of the simplex gives rise to
a situation where the random variables involved are not independent, but their dependence is weak. We use the Devroye-Mehrabian-Reddad total-variation bound \cite{DMR} to approximate
asymptotically the non-independent situation by independent ones.

This paper is organized as follows. In Section 1 we recall \cite{PZ2} the construction of the probability measure in $\mathcal{P}^k(\zR^n)$, and present some important random variables and their
correlations. In Section 2 we present the necessary geometry of the simplex, study the attainment of maxima on a given vertex and give bounds for orthant probabilities. Finally, in
Section 3 we consider the problem of attaining maxima at any vertex and prove that for $k>4$ the probability of attaining a relative maxima at some vertex of the simplex tends to one
as the dimension grows.

\section{Probability in spaces of polynomials}\label{PP}

Given a $k$-homogeneous polynomial $P:{\zR}^n \longrightarrow {\zR}$ there exists a unique symmetric $k$-linear function $\phi$ that satisfies $P(x)=\phi(x,\ldots ,x)$. We treat the space of $k$-homogeneous polynomials over ${\zR}^n$ as the dual of the symmetric tensor product
$\bigotimes_{k,s}{\zR}^n$.
The Hilbert space structure on the full tensor product $\bigotimes_k{\zR}^n$ is given by the inner product
$$
\langle v^1 \otimes \cdots \otimes v^k , w^1 \otimes \cdots \otimes w^k \rangle = \langle v^1 , w^1 \rangle \cdots \langle v^k , w^k \rangle .
$$
See \cite{Dwyer} for details.

Let the symmetrization operator $S:\bigotimes_k{\zR}^n \longrightarrow \bigotimes_k{\zR}^n$ be given by setting its values on a basis as
$$
S(e_{j_1} \otimes \cdots \otimes e_{j_k})=\frac{1}{k!}\sum_\sigma e_{j_\sigma(1)} \otimes \cdots \otimes e_{j_\sigma(k)},
$$
where $\sigma$ runs through all permutations of $\{1,\ldots ,k\}$. The image of $S$ ---the symmetric tensor product $\bigotimes_{k,s}{\zR}^n$---
is a predual of the space of polynomials over ${\zR}^n$, $\mathcal{P}^k({\zR}^n)$. We consider on $\bigotimes_{k,s}{\zR}^n$ the
Hilbert space structure induced by the ambient space $\bigotimes_k {\zR}^n$, and on $\mathcal{P}^k({\zR}^n)$ the dual Hilbert space structure.
The resulting norm on $\mathcal{P}^k({\zR}^n)$ is the Bombieri norm \cite{BBEM}
$$
\Vert P \Vert = \left(\sum_{|\alpha|=k} a_\alpha^2 \frac{\alpha !}{k!}\right)^{1/2},
$$
if $P(x)=\sum_{|\alpha|=k} a_\alpha x^\alpha$ is the monomial-sum expression of $P$.

All linear forms $\varphi : \mathcal{P}^k({\zR}^n) \longrightarrow {\zR}$ can be identified with an element of $\bigotimes_{k,s}{\zR}^n.$
For instance, evaluation at $x$: $e_x(P)=P(x)$ is given by $x \otimes \cdots \otimes x$. In the present work, we will mainly deal with linear forms
$$
\frac{\partial}{\partial v}(a):\mathcal{P}^k({\zR}^n) \longrightarrow {\zR}\quad \text{ given by }P\mapsto \frac{\partial P}{\partial v}(a),
$$
which can be identified with an element of $\bigotimes_{k,s}{\zR}^n$ as follows
$$
\frac{\partial}{\partial v}(a)=v \otimes a \otimes \cdots \otimes a + a \otimes v \otimes a \otimes \cdots \otimes a + \cdots +
a \otimes \cdots \otimes a \otimes v.
$$
The inner products between such linear forms are given by
\begin{align*}
  \left\langle \frac{\partial}{\partial v}(a),\frac{\partial}{\partial w}(b) \right\rangle
  &= \langle v \otimes a \otimes \cdots \otimes a + \cdots + a \otimes \cdots \otimes a \otimes v , \\
  &\quad \,\,\,\, w \otimes b \otimes \cdots \otimes b + \cdots + b \otimes \cdots \otimes b \otimes w \rangle \\
  &= k\langle v,w \rangle\langle a,b \rangle^{k-1}+(k^2-k)\langle v,b \rangle\langle a,w \rangle\langle a,b \rangle^{k-2}.
\end{align*}
If $\langle a,b \rangle=0$, the display above is zero whenever $k>2$. However, when $k=2$ one has $2\langle a,w \rangle \langle v,b \rangle$ which can be
non-zero. In fact, if $k=2$ and $n=2$,
$$
\frac{\partial}{\partial (1,0)}(0,1) \quad \text{ and } \quad \frac{\partial}{\partial (0,1)}(1,0)
$$
are the same linear form.

We consider on $\mathcal{P}^k({\zR}^n)$ the standard Gaussian measure $W$ corresponding to its Hilbert space structure, i.e.,
the measure
$$
W(B)=\frac{1}{(2 \pi)^{d/2}} \int_B e^{-\frac{\Vert P \Vert^2}{2}} \, dP,
\quad \text{ for any Borel set }B\subset \mathcal{P}^k({\zR}^n),
$$
where $d=\binom{n+k-1}{k}$, is the dimension of $\mathcal{P}^k({\zR}^n)$.
$W$ is rotation-invariant. Moreover, if $T: {\zR}^n \rightarrow {\zR}^n$ is an orthogonal transformation, then
$$
\widetilde{T}: \mathcal{P}^k({\zR}^n) \rightarrow \mathcal{P}^k({\zR}^n) \quad \text{ such that }\widetilde{T}(P)=P\circ T
$$
is a measure-preserving map.

Finally, note that if $\varphi : \mathcal{P}^k({\zR}^n) \longrightarrow {\zR}$ is a linear form then $\varphi$ is a normal random variable with zero mean and standard deviation given by $\Vert \varphi \Vert$.

\section{The simplex and orthant probabilities}\label{OP}

We construct the zero-centered $n$-dimensional simplex in the following way. Take
$$
\Delta^+ = \left\{x\in \zR^{n+1} : x_i\geq 0 \text{ for }i=1,\ldots ,n \text{ and }\sum_{i=1}^{n+1} x_i = 1 \right\}.
$$
Note that $\Delta^+$ is the convex hull of the canonical basis $\{e_1,\ldots ,e_{n+1}\}$ of $\zR^{n+1}$. The center of $\Delta^+$ is $z=(\frac{1}{n+1},\ldots ,\frac{1}{n+1})$.
We now translate $\Delta^+$ by subtracting $z$: $\Delta = \Delta^+ - z$. $\Delta$ and $\Delta^+$ are isometric copies of one another, and
$\Delta$ is contained in the subspace
$$
S=\left\{x\in \zR^{n+1} : \sum_{i=1}^{n+1} x_i = 0 \right\}.
$$
We identify $S$ isometrically with $\zR^n$ and call $\Delta$ the zero-centered $n$-dimensional simplex. The identification of $S$ with $\zR^n$ determines how $\Delta$ is immersed in $\zR^n$,
but as long as its center is at $0$, its position will be irrelevant to us, for one can be taken to another by an isometry.

We will need to calculate the angles that two edges form at a vertex, and also the angles between vertices. For this we refer to $\Delta^+$.
Let $a, a_1, \ldots ,a_n$ be the vertices of $\Delta$, and set $v_i=\frac{a - a_i}{\Vert a - a_i \Vert}$ for $i=1,\ldots , n$.

\noindent
$\bullet$ Angles between edges at a vertex:
\begin{align*}
\arccos \frac{\langle v_i,v_j\rangle}{\Vert v_i \Vert \Vert v_j \Vert} =& \arccos \frac{\langle e_k - e_i,e_k - e_j\rangle}{\Vert e_k - e_i \Vert \Vert e_k - e_j \Vert} \\
                                                          =& \arccos \frac{\Vert e_k \Vert^2 - \langle e_i, e_k\rangle - \langle e_k,e_j\rangle + \langle e_i,e_j\rangle}{\Vert e_k - e_i \Vert^2} \\
                                                          =& \arccos \frac{1}{2} \\
                                                          =& 60^\circ.
\end{align*}

\noindent
$\bullet$ Angles (at zero) between vertices:
\begin{align*}
\arccos \frac{\langle a_i,a_j\rangle}{\Vert a_i \Vert \Vert a_j \Vert} =& \arccos \frac{\langle e_i - z,e_j - z\rangle}{\Vert e_i - z \Vert \Vert e_j - z \Vert} \\
                                                          =& \arccos \frac{\langle e_i,e_j\rangle - \langle e_i, z\rangle - \langle z,e_j\rangle + \Vert z \Vert^2}{\Vert e_i - z \Vert^2} \\
                                                          =& \arccos \frac{-\frac{2}{n+1}+\frac{1}{n+1}}{\frac{n}{n+1}} \\
                                                          =& \arccos \frac{-1}{n}.
\end{align*}

\noindent
We will also need to calculate $\langle v_i, a \rangle$ and $\Vert a \Vert$:
\begin{align*}
\langle v_i,a\rangle = \left\langle \frac{e_k - e_i}{\Vert e_k - e_i \Vert}, e_k - z \right\rangle =&\frac{1}{\sqrt{2}} \left(1 - \langle e_k,z\rangle - \langle e_i,e_k\rangle + \langle e_i, z\rangle \right) =\frac{1}{\sqrt{2}},
\end{align*}

\begin{align*}
\Vert a \Vert^2 = \langle e_k - z,e_k - z\rangle = \langle e_k,e_k\rangle - \langle e_k, z\rangle - \langle z,e_k\rangle + \Vert z \Vert^2
        =& 1 - \frac{2}{n+1} + \frac{1}{n+1} \\
        =& \frac{n}{n+1}.
\end{align*}

Thus, the zero-centered $n$-dimensional simplex has $n+1$ vertices, in each of which incide $n$ edges at $60^\circ$ angles between each other. The angles formed (at zero) by different
vertices are all equal to $\arccos \frac{-1}{n}$, and are therefore closer and closer to perpendicular as the dimension $n$ grows. This will be an important feature for our calculations.

A homogeneous polynomial $P:\zR^n \longrightarrow \zR$ will have (relative to $\Delta$) a maximum at the vertex $a$ if its gradient is ``outward pointing'',

\vskip7mm
\centerline{
\includegraphics[height=8cm]{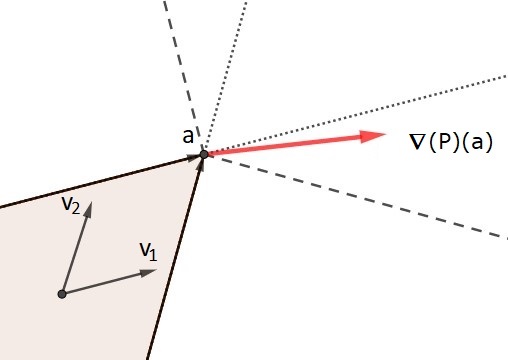}}
\vskip7mm

\noindent
that is, if $\frac{\partial P}{\partial v_i}(a)=\nabla P(a)(v_i) > 0$ for $i=1,\ldots , n$. Thus our problem of relative maximum at $a$ translates into orthant probability:
we want the measure of the subset of $\mathcal{P}^k({\zR}^n)$ on which
$$
X_1(a)=\frac{\frac{\partial}{\partial v_1}(a)}{\Vert \frac{\partial}{\partial v_1}(a) \Vert}, \ldots , X_n(a)=\frac{\frac{\partial}{\partial v_n}(a)}{\Vert \frac{\partial}{\partial v_n}(a) \Vert}
$$
are simultaneously positive. In order to calculate the covariance matrix of the random vector $(X_{1}(a),\dots, X_{n}(a))$ we need the following.
\begin{align*}
\left\Vert \frac{\partial}{\partial v_i}(a) \right\Vert^2 =& \left\langle \frac{\partial}{\partial v_i}(a) , \frac{\partial}{\partial v_i}(a) \right\rangle \\
                                                          =& k\langle v_i,v_i\rangle\langle a,a\rangle^{k-1} + (k^2 - k)\langle v_i,a\rangle^2 \langle a,a\rangle^{k-2} \\
                                                          =& k \Vert a \Vert^{2k-2} + (k^2 - k)\frac{1}{2} \Vert a \Vert^{2k-4} \\
                                                          =& k \Vert a \Vert^{2k-4} \left(\Vert a \Vert^2 + \frac{k-1}{2}\right).
\end{align*}
\begin{align*}
\left\langle \frac{\partial}{\partial v_i}(a) , \frac{\partial}{\partial v_j}(a) \right\rangle =& k\langle v_i,v_j\rangle\langle a,a\rangle^{k-1} + (k^2 - k)\langle v_i,a\rangle\langle a,v_j\rangle \langle a,a\rangle^{k-2} \\
                                                                                               =& \frac{1}{2} k \Vert a \Vert^{2k-2} + (k^2 - k)\frac{1}{2} \Vert a \Vert^{2k-4} \\
                                                                                               =& k \Vert a \Vert^{2k-4} \left(\frac{1}{2} \Vert a \Vert^2 + \frac{k-1}{2}\right).
\end{align*}

In section 3 we will consider maxima at different vertices, $a$ and $b$. For this, we need the following bound (here the $w_j$s define the edges inciding at $b$).

\begin{align*}
\left|\left\langle \frac{\partial}{\partial v_i}(a) , \frac{\partial}{\partial w_j}(b) \right\rangle \right|=& \left|k\langle v_i,w_j\rangle\langle a,b\rangle^{k-1} + (k^2 - k)\langle v_i,b\rangle\langle a,w_j\rangle \langle a,b\rangle^{k-2}\right| \\
                                                 =& \left|k\langle v_i,w_j\rangle \left(\Vert a \Vert^2 \frac{-1}{n}\right)^{k-1} + (k^2 - k)\langle v_i,b\rangle\langle a,w_j\rangle \left(\Vert a \Vert^2 \frac{-1}{n}\right)^{k-2}\right| \\
                                             \leq & k \Vert a \Vert^{2k-2} \left(\frac{1}{n}\right)^{k-1} + (k^2 - k)\Vert a \Vert^2 \Vert a \Vert^{2k-4}\left(\frac{1}{n}\right)^{k-2} \\
                                                 =& k \Vert a \Vert^{2k-4} \left(\Vert a \Vert^2 \frac{1}{n^{k-1}} + (k-1)\Vert a \Vert^2 \frac{1}{n^{k-2}}\right).
\end{align*}
Thus, after normalization, we have $\Vert X_i(a) \Vert = 1$, so $X_i(a)$ are $N(0,1)$ random variables, with correlation
\begin{align}
\nonumber
\langle X_i(a), X_j(a) \rangle =& \frac{k \Vert a \Vert^{2k-4} \left(\frac{1}{2} \Vert a \Vert^2 + \frac{k-1}{2}\right)}{k \Vert a \Vert^{2k-4} \left(\Vert a \Vert^2 + \frac{k-1}{2}\right)} \\\nonumber
                               =& \frac{\Vert a \Vert^2 + (k-1)}{2 \Vert a \Vert^2 + (k-1)} \\\nonumber
                               =& \frac{nk+(k-1)}{n(k+1)+(k-1)} \\
                               \label{eq:rho_n}
                               =& \rho_n.
\end{align}
Note that $\frac{1}{2} < \rho_n < 1$ and $\rho_{n} \to k/(k+1)$. We also have the bound
\begin{align*}
|\langle X_i(a), X_j(b) \rangle| \leq & \frac{\Vert a \Vert^2 \left( \frac{1}{n^{k-1}} + (k-1) \frac{1}{n^{k-2}}\right)}{\Vert a \Vert^2 + \frac{k-1}{2}} \\
                                     =& \frac{1}{n^{k-2}} \frac{1}{2k-1} \left(\frac{1}{n} + (k-1)\right).
\end{align*}

Recall that for $x>0$ the gamma function is given by
$$
\Gamma(x)=\int\limits_{0}^{\infty} t^{x-1} e^{-t} dt.
$$
For $a,b >0$ the beta function is given by
$$
B(a,b)=\int\limits_{0}^{1} t^{a-1} (1-t)^{b-1} dt.
$$
It is well known that
$$
B(a,b)= \frac{\Gamma(a)\Gamma(b)}{\Gamma(a+b)}.
$$
For $Z\sim N(0,1)$ we let $\Phi$ be the cumulative distribution function of $Z$. We will also use the notation $\phi(x)=\Phi^{\prime}(x)$.
We now prove the following bounds for orthant probabilities.
\begin{theorem}\label{cotainf}
Let $\rho \in (0,1)$ and let $(X_1,\ldots ,X_n)$ be a multivariate normal random vector with zero means, variances equal to one and correlations equal to $\rho$. Let $f(n,\rho)=P(X_1 >0, \dots,X_n>0 )$.
\begin{enumerate}
\item There exists a universal constant $c_{1}>0$ such that if $\rho>1/2$ and $n\geq 2$, then
\begin{equation}
\frac{ c_{1}}{\sqrt{2+(1/\rho-1)\log(n)}}  \leq \frac{f(n,\rho)}{n^{1-1/\rho}}.
\label{eq:bounds_rho_high_1}
\end{equation}
and
\begin{equation}
 \frac{f(n,\rho)}{n^{1-1/\rho}} \leq  \frac{1} {2^{2-1/\rho}}\sqrt{\frac{1-\rho}{\rho  }} \left\lbrace 1+B(2, 1/\rho -1)\right\rbrace.
\label{eq:bounds_rho_high_2}
\end{equation}
\item If $\rho<1/2$ and  $n$ satisfies $n\geq (1/\rho -1)\log(n)/\log(2)$, then
\begin{equation}
\frac{1} {2^{2-1/\rho}}\sqrt{\frac{1-\rho}{\rho  }}\left\lbrace \Gamma(1/\rho -1) -1 \right\rbrace \leq \frac{f(n,\rho)}{n^{1-1/\rho}}.
\label{eq:bounds_rho_low}
\end{equation}
Moreover, there exists $n_{0}=n_{0}(\rho)$ such that if $n\geq n_{0}$ then
\begin{equation}
\frac{f(n,\rho)}{n^{1-1/\rho} } \leq   \sqrt{\frac{1-\rho}{\rho}} \left[(1/\rho - 1){\log(n)^{2}}\right]^{1/\rho -2}.
\label{eq:bounds_rho_low_2}
\end{equation}
%where
%\begin{align*}
%&c_{\rho,n}=(\sqrt{2\pi})^{1/\rho-2}\sqrt{\frac{1-\rho}{\rho}}  \int\limits_{0}^{1}  \left[\phi\left(  \Phi^{-1}(x)\right)\right]^{1/\rho-2}dx \\
%& +
%\end{align*}
\end{enumerate}

\end{theorem}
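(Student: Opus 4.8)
The plan is to start from Steck's identity $f(n,\rho)=\mathbb{E}[\Phi^{n}(Z\sqrt{\rho/(1-\rho)})]$ recalled in the introduction and reduce all four inequalities to estimates for a single one–dimensional integral. Writing $\lambda=\sqrt{\rho/(1-\rho)}$ and substituting $t=\Phi(\lambda z)$, so that $z=\Phi^{-1}(t)/\lambda$ and the Jacobian produces the ratio $\phi(\Phi^{-1}(t)/\lambda)/\phi(\Phi^{-1}(t))=e^{(1-1/\lambda^{2})\Phi^{-1}(t)^{2}/2}$, one obtains the exact formula
\[
f(n,\rho)=\sqrt{\tfrac{1-\rho}{\rho}}\int_{0}^{1}t^{n}\,e^{-\beta\,\Phi^{-1}(t)^{2}/2}\,dt,\qquad \beta:=\tfrac1\rho-2 ,
\]
where I set $\alpha:=1/\rho-1=1/\lambda^{2}$, so that $\beta=\alpha-1$ and $\sqrt{(1-\rho)/\rho}=\sqrt{\alpha}$. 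The sign of $\beta$ flips precisely at $\rho=1/2$, which is the source of the two regimes. The analytic input throughout is the pair of standard Gaussian tail estimates $\frac{z}{z^{2}+1}\phi(z)\le 1-\Phi(z)\le\frac12 e^{-z^{2}/2}$ for $z\ge0$; the right-hand one gives $\Phi^{-1}(t)^{2}\le 2\log\frac1{2(1-t)}$ on $[\tfrac12,1)$ (and its mirror $\Phi^{-1}(t)^{2}\le 2\log\frac1{2t}$ on $(0,\tfrac12]$), while the left-hand one, once one feeds in the crude a priori bound $\Phi^{-1}(t)\le\sqrt{2\log n}$ valid for $t\le 1-1/n$, yields a matching lower bound for $\Phi^{-1}(t)^{2}$ up to $\log\log$ corrections.

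The upper bound \eqref{eq:bounds_rho_high_2} ($\rho>1/2$, $\beta<0$) and the lower bound \eqref{eq:bounds_rho_low} ($\rho<1/2$, $\beta>0$) are dual. In both cases $e^{-\beta\Phi^{-1}(t)^{2}/2}$ is compared with $(2(1-t))^{\beta}$ on $[\tfrac12,1)$ via the clean tail bound — an \emph{upper} estimate when $\beta<0$, a \emph{lower} one when $\beta>0$. For $\rho>1/2$, using also $(2t)^{\beta}$ on $(0,\tfrac12]$, this gives
\[
f(n,\rho)\le\sqrt{\alpha}\,2^{\beta}\Big[\tfrac{2^{-(n+\alpha)}}{n+\alpha}+B(n+1,\alpha)\Big],
\]
while for $\rho<1/2$ one discards the $[0,\tfrac12]$ part and obtains $f(n,\rho)\ge\sqrt{\alpha}\,2^{\beta}\big(B(n+1,\alpha)-\tfrac{2^{-(n+1)}}{n+1}\big)$. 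Since $2^{\beta}=1/2^{2-1/\rho}$, what remains is to compare $n^{\alpha}B(n+1,\alpha)$ with $\Gamma(\alpha)$: from $(1-s)^{n}\le e^{-ns}$ one gets $n^{\alpha}B(n+1,\alpha)\le\Gamma(\alpha)$ outright, and writing $n^{\alpha}B(n+1,\alpha)=\Gamma(\alpha)\,n^{\alpha}\Gamma(n+1)/\Gamma(n+1+\alpha)$ the last factor increases to $1$ (log-convexity of $\Gamma$ together with $\psi(n+1)>\log n$), so $n^{\alpha}B(n+1,\alpha)\uparrow\Gamma(\alpha)$. For the upper bound one then verifies the elementary inequality $\Gamma(\alpha)+\tfrac14\le 1+B(2,\alpha)$ on $(0,1)$ (recall $B(2,\alpha)=\frac1{\alpha(\alpha+1)}$), which absorbs the exponentially small term and gives \eqref{eq:bounds_rho_high_2}; for the lower bound, once $n$ exceeds the stated threshold the deficit $\Gamma(\alpha)-n^{\alpha}B(n+1,\alpha)$ plus the $2^{-(n+1)}$ term is at most $1$, which is exactly the $\Gamma(1/\rho-1)-1$ appearing in \eqref{eq:bounds_rho_low}.

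For the lower bound \eqref{eq:bounds_rho_high_1} ($\rho>1/2$) I would not pass to the Beta integral but stay with the Steck integral: restrict the expectation to $\{Z\ge a_{n}\}$ with $a_{n}=\sqrt{2\log n}/\lambda=\sqrt{2\alpha\log n}$, so that $\Phi(\lambda Z)^{n}\ge\Phi(\sqrt{2\log n})^{n}\ge e^{-2n(1-\Phi(\sqrt{2\log n}))}$, and the bound $1-\Phi(\sqrt{2\log n})\le\frac1{n\sqrt{2\pi}\sqrt{2\log n}}$ makes this $n$-th power bounded below by an absolute constant for $n\ge3$. Hence $f(n,\rho)\gtrsim 1-\Phi(a_{n})$, and the Mills-ratio lower bound gives $1-\Phi(\sqrt{2\alpha\log n})\gtrsim n^{-\alpha}/\sqrt{\alpha\log n}$ when $\alpha\log n\gtrsim1$ and $\gtrsim n^{-\alpha}$ otherwise; since $n^{-\alpha}/\sqrt{\max(1,\alpha\log n)}\ge c\,n^{-\alpha}/\sqrt{2+\alpha\log n}$ for an absolute $c$, this yields \eqref{eq:bounds_rho_high_1}, with $n=2$ handled separately by Sheppard's formula $f(2,\rho)=\tfrac14+\tfrac1{2\pi}\arcsin\rho\ge\tfrac13$. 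Finally, for the upper bound \eqref{eq:bounds_rho_low_2} ($\rho<1/2$, $\beta>0$) one uses $e^{-\beta\Phi^{-1}(t)^{2}/2}\le1$ together with the refined estimate $e^{-\beta\Phi^{-1}(t)^{2}/2}\le\big((\log n)^{2}(1-t)\big)^{\beta}$ on $[\Phi(1),1-1/n]$ — this is exactly where the Mills-ratio lower bound for $\Phi^{-1}$ is invoked, the lower-order corrections being swallowed into the factor $(\log n)^{2}$ once $n$ is large — and on the short interval $[1-1/n,1]$ a crude bound of order $n^{-\alpha}$ times a power of $\log n$; integrating, the dominant term is $(\log n)^{2(\alpha-1)}B(n+1,\alpha)\le(\log n)^{2(\alpha-1)}\Gamma(\alpha)n^{-\alpha}$, and $\Gamma(\alpha)\le\alpha^{\alpha-1}$ for $\alpha\ge1$ gives $n^{\alpha}f(n,\rho)\le\sqrt{\alpha}\,[\alpha(\log n)^{2}]^{\alpha-1}$ once $n\ge n_{0}(\rho)$.

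The step I expect to be the main obstacle is the one used twice above: obtaining fully explicit two-sided control of $\Phi^{-1}(t)$ near the quantile level $t=1-1/n$. This is where the $\log n$ factors in the statement come from — they are the residue of the gap between the two Mills-ratio bounds for $1-\Phi$ — and one must keep that gap under control uniformly in $\rho\in(0,1)$, in particular near $\rho=1/2$, where $\beta\to0$ and the $(2(1-t))^{\beta}$ estimates degenerate, and near the endpoints of the range of $\rho$. Everything else is a change of variables plus bookkeeping with Beta and incomplete-Gamma integrals.
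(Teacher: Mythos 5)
Your overall route is the paper's: Steck's identity, the change of variables giving $f(n,\rho)=\sqrt{\alpha}\int_0^1 t^n e^{-\beta\Phi^{-1}(t)^2/2}\,dt$ with $\alpha=1/\rho-1$, $\beta=1/\rho-2$ (the paper writes the integrand as a power of $\phi(\Phi^{-1}(t))$, and its inequality $\phi(\Phi^{-1}(t))\geq\sqrt{2/\pi}\,\min(t,1-t)$ is exactly your tail bound $1-\Phi(z)\leq\tfrac12 e^{-z^2/2}$), the split at $1/2$ with a Beta-function comparison for \eqref{eq:bounds_rho_high_2} and \eqref{eq:bounds_rho_low}, the restriction to the event $\{Z\geq\sqrt{2\alpha\log n}\}$ for \eqref{eq:bounds_rho_high_1}, and a split at a quantile of height $1-\mathrm{polylog}(n)/n$ for \eqref{eq:bounds_rho_low_2}. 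Your arguments for \eqref{eq:bounds_rho_high_1}, \eqref{eq:bounds_rho_high_2} and \eqref{eq:bounds_rho_low_2} are sound; in \eqref{eq:bounds_rho_high_2} your elementary step $n^{\alpha}B(n+1,\alpha)\leq\Gamma(\alpha)$ (from $(1-u)^n\leq e^{-nu}$) combined with $\Gamma(\alpha)+\tfrac14\leq 1+B(2,\alpha)$ on $(0,1)$ is a clean replacement for the paper's appeal to a gamma-ratio inequality of Qi, though you should actually prove that last inequality (a two-chord convexity bound for $\Gamma$ on $[2,3]$ does it) rather than merely assert it.

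The genuine gap is in \eqref{eq:bounds_rho_low}. You need $n^{\alpha}\bigl(B(n+1,\alpha)-\tfrac{2^{-(n+1)}}{n+1}\bigr)\geq\Gamma(\alpha)-1$ for every $n$ with $n\geq\alpha\log(n)/\log(2)$, and you assert that at that threshold the deficit $\Gamma(\alpha)-n^{\alpha}B(n+1,\alpha)$ plus the $2^{-(n+1)}$ term is at most $1$. That is false: the threshold only guarantees $n^{\alpha}2^{-n}\leq 1$, it does not make $n$ large compared with $\alpha^2$. Concretely, take $\rho=1/5$ (so $\alpha=4$) and $n=16$, which meets the threshold with equality; then $n^{\alpha}B(n+1,\alpha)=16^4\cdot\frac{6}{17\cdot18\cdot19\cdot20}\approx 3.38$, so the deficit is about $2.62>1$, and for smaller $\rho$ it grows like a constant multiple of $\Gamma(\alpha)$: since the deficit is of order $\Gamma(\alpha)\,\alpha^{2}/n$, you would need $n\gtrsim\alpha^{2}\Gamma(\alpha)$, far beyond the stated threshold $n\asymp\alpha\log\alpha$. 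Worse, at such parameter values your intermediate bound $\sqrt{\alpha}\,2^{\beta}\bigl(B(n+1,\alpha)-2^{-(n+1)}/(n+1)\bigr)$ is itself smaller than the right-hand side of \eqref{eq:bounds_rho_low} (in the example above, $\approx\sqrt{\alpha}2^{\beta}n^{-\alpha}\cdot 3.35$ versus $\sqrt{\alpha}2^{\beta}n^{-\alpha}\cdot 5$), so no sharper bookkeeping of the deficit can rescue the argument: the loss happens earlier, when $e^{-\beta\Phi^{-1}(t)^{2}/2}$ is replaced by $(2(1-t))^{\beta}$ near $t=1$, which discards a factor of order $\bigl(\Phi^{-1}(t)\sqrt{\pi/2}\bigr)^{\beta}$ that is exactly what is needed when $\alpha$ is large and $n$ is near the threshold. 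Be aware that you have hit a genuinely delicate point: at this same step the paper invokes Qi's inequality in the form $n^{\alpha}B(n+1,\alpha)\geq\Gamma(\alpha)$, which is the reverse of your (correct) bound $n^{\alpha}B(n+1,\alpha)<\Gamma(\alpha)$, so the comparison you propose cannot simply be aligned with the paper's; to close \eqref{eq:bounds_rho_low} along these lines you must either retain part of the Gaussian factor (e.g.\ keep a $\Phi^{-1}(t)^{\beta}$ term before integrating) or work under a stronger largeness assumption on $n$ than the stated threshold.
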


\begin{proof}
Set $s=\rho / (1-\rho)$. By equation (2.5) in \cite{Steck}
\begin{equation}
f(n,\rho) = \mathbb{E}\left[ \Phi^{n}\left(Z \sqrt{s} \right)\right].
\label{eq:steck}
\end{equation}

We begin with the proof of part (1). Note that by Gordon's inequality \cite{Gordon}, for all positive $x$,
$$
1-\Phi(x) < \frac{\phi(x)}{x}
$$
and that by Birnbaum's inequality \cite{Birnbaum}
$$
\frac{2\phi(x)}{\sqrt{4+x^{2}}+x} < 1-\Phi(x).
$$
Let $\alpha_{n}=n^{1/\rho-1}$. By Markov's inequality we have
\begin{align*}
\mathbb{E}\left[ \Phi^{n}\left(Z \sqrt{s} \right)\right]  &\geq \Phi^{n} \left(\sqrt{2\log(\alpha_{n})s}\right) P\left( \Phi^{n}\left(Z \sqrt{s} \right)
\geq \Phi^{n} \left(\sqrt{2\log(\alpha_{n})s}\right)\right)
\\
&= \Phi^{n} \left(\sqrt{2\log(\alpha_{n})s}\right)P\left( Z\geq  \sqrt{2\log(\alpha_{n})}\right).
\end{align*}
Using Birnbaum's lower bound we obtain
\begin{align*}
P\left( Z\geq  \sqrt{2\log(\alpha_{n})}\right) &\geq \frac{2\phi(\sqrt{2\log(\alpha_{n})})}{\sqrt{4+2\log(\alpha_{n})}+\sqrt{2\log(\alpha_{n})}} \\
                                               &=\frac{2}{\sqrt{2\pi}}\frac{1}{\alpha_{n}}\frac{1}{\sqrt{4+2\log(\alpha_{n})}+\sqrt{2\log(\alpha_{n})}}.
\end{align*}
Recall that $\alpha_{n}=n^{1/\rho-1}$, so
$$
P\left( Z\geq  \sqrt{2\log(\alpha_{n})}\right)\geq n^{1-1/\rho} \frac{2}{\sqrt{2\pi}}\frac{1}{\sqrt{4+2(1/\rho-1)\log(n)}+\sqrt{2(1/\rho-1)\log(n)}}.
$$
%Since by assumption $(1/\rho-1)\log(n)\geq 1$ we have
%$$
%\frac{1}{\sqrt{4+2(1/\rho-1)\log(n)}+\sqrt{2(1/\rho-1)\log(n)}} \geq \frac{1}{(\sqrt{6}+\sqrt{2})\sqrt{(1/\rho-1)\log(n)}}
%$$
%Thus
%$$
%P\left( Z\geq  \sqrt{2\log(\alpha_{n})}\right) \geq C_{1} \frac{n^{1-1/\rho}}{\sqrt{(1/\rho-1)\log(n)}},
%$$
%where $C_{1}>0$ is a universal constant.

On the other hand, using Gordon's upper bound, we obtain
\begin{align*}
\Phi^{n} \left(\sqrt{2\log(\alpha_{n})s}\right) \geq \left\lbrace 1 - \frac{\phi(\sqrt{2\log(\alpha_n)s})}{\sqrt{2\log(\alpha_n)s}}\right\rbrace^{n} &=\left\lbrace 1 - \frac{\alpha_{n}^{-s}}{\sqrt{4\pi\log(\alpha_n)s}}\right\rbrace^{n} \\
                                                &=\left\lbrace 1 - \frac{1}{n\sqrt{4\pi\log(n)}}\right\rbrace^{n}.
\end{align*}
Therefore, for $n\geq 2$
$$
\Phi^{n} \left(\sqrt{2\log(\alpha_{n})s}\right)\geq \left\lbrace 1 - \frac{1}{n\sqrt{4\pi\log(n)}}\right\rbrace^{n} \geq \left\lbrace 1 - \frac{1}{2\sqrt{4\pi\log(2)}}\right\rbrace^{2}.
$$
We conclude that
\begin{align*}
&f(n,\rho) \geq  n^{1-1/\rho} \frac{1}{\sqrt{4+2(1/\rho-1)\log(n)}+\sqrt{2(1/\rho-1)\log(n)}} \frac{2}{\sqrt{2\pi}} \left\lbrace 1 - \frac{1}{2\sqrt{4\pi\log(2)}}\right\rbrace^{2}.
\end{align*}
Now \eqref{eq:bounds_rho_high_1} follows immediately from the last display.

We turn now to the proof of \eqref{eq:bounds_rho_high_2}. First, some preliminaries.
Let $U= \Phi\left(Z \sqrt{s} \right)$. Note that $U$ takes values in $[0,1]$. We will calculate the density function of $U$, $f_{U}$. Take $u\in [0,1]$. Then
$$
P\left(U \leq u \right)= P\left(\Phi\left(Z \sqrt{s} \right) \leq u \right) = P\left(Z \leq \Phi^{-1}(u)/\sqrt{s} \right).
$$
Differentiating this last display with respect to $u$ we get
$$
f_{U}(u)= \frac{1}{\sqrt{s}} \frac{\phi\left( \Phi^{-1}(u)/\sqrt{s}\right) }{\phi\left( \Phi^{-1}(u)\right)}.
$$
Simplifying, we get
\begin{align*}
f_{U}(u)= \frac{1}{\sqrt{s}} \exp\left\lbrace \frac{-1}{2s}  [\Phi^{-1}(u)]^{2} + \frac{1}{2}[\Phi^{-1}(u)]^{2} \right\rbrace
&= \frac{1}{\sqrt{s}} \exp\left\lbrace \frac{1}{2}  [\Phi^{-1}(u)]^{2} \left( 1-\frac{1}{s} \right) \right\rbrace
\\
&=
\frac{(\sqrt{2\pi})^{1/s-1}}{\sqrt{s}} \left[\phi\left(  \Phi^{-1}(u)\right)\right]^{1/s-1}.
\end{align*}
Hence, by \eqref{eq:steck}
\begin{equation}
f(n,\rho) = \frac{(\sqrt{2\pi})^{1/s-1}}{\sqrt{s}} \int\limits_{0}^{1} x^{n} \left[\phi\left(  \Phi^{-1}(x)\right)\right]^{1/s-1}dx.
\label{eq:main_pre_bound}
\end{equation}
We claim that
\begin{equation}
\min(x,1-x) \sqrt{\frac{2}{\pi}} \leq \phi\left(  \Phi^{-1}(x)\right)
\label{eq:phi_lin}
\end{equation}
By symmetry it suffices to show that $\phi\left(  \Phi^{-1}(x)\right)\geq \sqrt{\frac{2}{\pi}} x$ for $x\in [0,1/2]$. This follows from the fact that $\phi\left(  \Phi^{-1}(x)\right)$ is concave, and equal to $\sqrt{\frac{2}{\pi}} x$ for $x=0$ and $x=1/2$.

We are now ready to prove \eqref{eq:bounds_rho_high_2}.
The assumption $\rho>1/2$ implies $s>1$ and hence $1/s -1 <0$. Now write
$$
\int\limits_{0}^{1} x^{n} \left[\phi\left(  \Phi^{-1}(x)\right)\right]^{1/s-1}dx = \int\limits_{0}^{1/2} x^{n} \left[\phi\left(  \Phi^{-1}(x)\right)\right]^{1/s-1}dx + \int\limits_{1/2}^{1} x^{n} \left[\phi\left(  \Phi^{-1}(x)\right)\right]^{1/s-1}dx.
$$
Using the lower bound in \eqref{eq:phi_lin} we get
\begin{align*}
&\int\limits_{0}^{1/2} x^{n} \left[\phi\left(  \Phi^{-1}(x)\right)\right]^{1/s-1}dx + \int\limits_{1/2}^{1} x^{n} \left[\phi\left(  \Phi^{-1}(x)\right)\right]^{1/s-1}dx \leq
\\
&
\left(\sqrt{\frac{\pi}{2}}\right)^{1-1/s} \left\lbrace \int\limits_{0}^{1/2} x^{n} x^{1/s-1}dx +  \int\limits_{1/2}^{1} x^{n} (1-x)^{1/s-1}dx\right\rbrace=
\\
&
\left(\sqrt{\frac{\pi}{2}}\right)^{1-1/s} \left\lbrace \left(\frac{1}{2}\right)^{n+1/s} \frac{1}{n+1/s}+  \int\limits_{1/2}^{1} x^{n} (1-x)^{1/s-1}dx \right\rbrace\leq
\\
&
\left(\sqrt{\frac{\pi}{2}}\right)^{1-1/s} \left\lbrace \left(\frac{1}{2}\right)^{n+1/s} \frac{1}{n+1/s}+  B(n+1, 1/s)\right\rbrace.
\end{align*}
Thus
$$
f(n,\rho)\leq \frac{1}{ 2^{1-1/s} \sqrt{s}}  \left\lbrace \left(\frac{1}{2}\right)^{n+1/s} \frac{1}{n+1/s}+  B(n+1, 1/s)\right\rbrace.
$$
Replacing $s=\rho/(1-\rho)$ yields
$$
f(n,\rho)\leq \frac{1} {2^{2-1/\rho}}\sqrt{\frac{1-\rho}{\rho  }}   \left\lbrace \left(\frac{1}{2}\right)^{n+1/\rho -1} \frac{1}{n+1/\rho -1}+  B(n+1, 1/\rho -1)\right\rbrace.
$$
Now, for all $n$,
$
2^{-n-1/s} (n+1/s)^{-1} \leq n^{1-1/\rho}.
$
Recall that
$$
B(n+1, 1/s)= \frac{\Gamma(n+1)\Gamma(1/s)}{\Gamma(n+1+1/s)}.
$$
Using inequality (3.75) in \cite{qi} with $x=n$, $a=1$, $b=1/s+1$, $x_0=1$, we have
$$
\frac{\Gamma(n+1)\Gamma(1/s)}{\Gamma(n+1+1/s)} \leq n^{1-1/\rho} \frac{\Gamma(2)\Gamma(1/s)}{\Gamma(2+1/s)}=n^{1-1/\rho} B(2, 1/s)=n^{1-1/\rho} B(2, 1/\rho -1).
$$
Thus we have
$$
f(n,\rho) \leq  n^{1-1/\rho}\frac{1} {2^{2-1/\rho}}\sqrt{\frac{1-\rho}{\rho  }} \left\lbrace 1+B(2, 1/\rho -1)\right\rbrace,
$$
which is precisely \eqref{eq:bounds_rho_high_2}. This finishes the proof of part (1).

Next we prove part (2).
Since $\rho<1/2$, we have $s<1$ and hence $1/s -1 >0$. Using the lower bound in \eqref{eq:phi_lin} we get
\begin{align*}
&\int\limits_{0}^{1/2} x^{n} \left[\phi\left(  \Phi^{-1}(x)\right)\right]^{1/s-1}dx + \int\limits_{1/2}^{1} x^{n} \left[\phi\left(  \Phi^{-1}(x)\right)\right]^{1/s-1}dx \geq
\\
&
\left(\sqrt{\frac{\pi}{2}}\right)^{1-1/s}   \int\limits_{1/2}^{1} x^{n} (1-x)^{1/s-1}dx =
\\
&
\left(\sqrt{\frac{\pi}{2}}\right)^{1-1/s} \left\lbrace  - \int\limits_{0}^{1/2} x^{n} (1-x)^{1/s-1}dx + B(n+1,1/s)  \right\rbrace \geq
\\
&
\left(\sqrt{\frac{\pi}{2}}\right)^{1-1/s} \left\lbrace - \frac{1}{2^{n}}\int\limits_{0}^{1/2} (1-x)^{1/s-1}dx + B(n+1,1/s)  \right\rbrace =
\\
&
\left(\sqrt{\frac{\pi}{2}}\right)^{1-1/s} \left\lbrace  - \frac{s}{2^{n}}+\frac{s}{2^{n+1/s}} + B(n+1,1/s)  \right\rbrace \geq
\\
&
\left(\sqrt{\frac{\pi}{2}}\right)^{1-1/s} \left\lbrace  -  \frac{s}{2^{n}}+ B(n+1,1/s)  \right\rbrace.
\end{align*}
For all $n$ such that $n\geq (1/\rho -1)\log(n)/\log(2)$,
$
s 2^{-n} \leq 2^{-n}\leq n^{1-1/\rho}.
$
Using inequality (3.75) in \cite{qi} with $x=n$, $a=1$, $b=1/s+1$, $x_0=1$, we have
$$
B(n+1,1/s)=\frac{\Gamma(n+1)\Gamma(1/s)}{\Gamma(n+1+1/s)} \geq n^{-1/s} \Gamma(1/s)= n^{1-1/\rho}\Gamma(1/\rho -1).
$$
Since $\rho<1/2$,
$
\Gamma(1/\rho -1) > \Gamma(1)=1.
$
Thus
$$
\left(\sqrt{\frac{\pi}{2}}\right)^{1-1/s} \left\lbrace  -  \frac{s}{2^{n}}+ B(n+1,1/s)  \right\rbrace \geq n^{1-1/\rho}\left(\sqrt{\frac{\pi}{2}}\right)^{1-1/s} \left\lbrace \Gamma(1/\rho -1) -1 \right\rbrace.
$$
We have shown that
\begin{align*}
f(n,\rho) &\geq  n^{1-1/\rho} \frac{(\sqrt{2\pi})^{1/s-1}}{\sqrt{s}}\left(\sqrt{\frac{\pi}{2}}\right)^{1-1/s} \left\lbrace \Gamma(1/\rho -1) -1 \right\rbrace
\\
&
=
n^{1-1/\rho} \frac{1} {2^{2-1/\rho}}\sqrt{\frac{1-\rho}{\rho  }}\left\lbrace \Gamma(1/\rho -1) -1 \right\rbrace .
\end{align*}
This proves \eqref{eq:bounds_rho_low}.

Finally, turn to the proof of \eqref{eq:bounds_rho_low_2}. Let $h_{n}=n/\left[ {\log(n)}\log(n^{1/s})\right]$ and $\gamma_{n}=\Phi(\sqrt{2\log(h_n)})$. Using \eqref{eq:main_pre_bound} write
\begin{equation}
f(n,\rho) = \frac{(\sqrt{2\pi})^{1/s-1}}{\sqrt{s}} \left\lbrace \int\limits_{0}^{\gamma_{n}} x^{n} \left[\phi\left(  \Phi^{-1}(x)\right)\right]^{1/s-1}dx + \int\limits_{\gamma_{n}}^{1} x^{n} \left[\phi\left(  \Phi^{-1}(x)\right)\right]^{1/s-1}dx \right\rbrace.
\label{eq:last_1}
\end{equation}
Since $\rho<1/2$ we have $s<1$ and hence $1/s-1 >0$. Then $\left[\phi\left(  \Phi^{-1}(x)\right)\right]^{1/s-1}$ is a decreasing function over $(1/2,1)$. Since for sufficiently large $n$ it holds that $\gamma_{n}> 1/2$, we have that
\begin{align}
&\int\limits_{\gamma_{n}}^{1} x^{n} \left[\phi\left(  \Phi^{-1}(x)\right)\right]^{1/s-1}dx \leq
\nonumber
\\
&  \left[\phi\left(  \Phi^{-1}(\gamma_{n})\right)\right]^{1/s-1} \int\limits_{\gamma_{n}}^{1} x^{n} dx=
\nonumber
\\
& \frac{1}{(\sqrt{2\pi})^{1/s-1}} \left[ \exp(-\log(n) + \log({\log(n)}\log(n^{1/s}))\right]^{1/s-1} \frac{(1 - \gamma_{n}^{n+1})}{n+1}
\nonumber=
\\
& \frac{1}{(\sqrt{2\pi})^{1/s-1}} n^{1-1/s} \left[{\log(n)}\log(n^{1/s})\right]^{1/s -1} \frac{(1 - \gamma_{n}^{n+1})}{n+1}\leq
\nonumber
\\
& \frac{1}{(\sqrt{2\pi})^{1/s-1}} n^{-1/s} \left[{\log(n)}\log(n^{1/s})\right]^{1/s -1}=
\nonumber
\\
& \frac{1}{(\sqrt{2\pi})^{1/\rho-2}} n^{1-1/\rho} \left[(1/\rho - 1){\log(n)^{2}}\right]^{1/\rho -2}.
\label{eq:last_2}
\end{align}

Note that
\begin{align}
\int\limits_{0}^{\gamma_{n}} x^{n} \left[\phi\left(  \Phi^{-1}(x)\right)\right]^{1/s-1}dx \leq  \gamma_{n}^{n} \int\limits_{0}^{1}  \left[\phi\left(  \Phi^{-1}(x)\right)\right]^{1/s-1}dx
\label{eq:last_3}
\end{align}

By Birnbaum's inequality
\begin{align*}
\gamma_{n}^{n}&\leq  \left( 1 - \frac{\phi(\sqrt{2\log(h_{n}})}{\sqrt{4 + {2\log(h_{n})}}+ \sqrt{2\log(h_{n})}}\right)^{n}
\\
&=
\left( 1 - \frac{{\log(n)} \log(n^{1/s})}{n \sqrt{2\pi}(\sqrt{4 + {2\log(h_{n})}}+ \sqrt{2\log(h_{n})})}\right)^{n}.
\end{align*}
Now, since $h_{n}= n/\left[ {\log(n)}\log(n^{1/s})\right]$, for all sufficiently large $n$
$$
\frac{{\log(n)} \log(n^{1/s})}{n \sqrt{2\pi}(\sqrt{4 + {2\log(h_{n})}}+ \sqrt{2\log(h_{n})})} \geq  \frac{\log(n^{1/s})}{n}.
$$
Thus
\begin{align*}
\gamma_{n}^{n} \leq \left( 1- \frac{ \log(n^{1/s})}{n}\right)^{n}.
\end{align*}
Also, for all sufficiently large $n$
$$
\left( 1- \frac{ \log(n^{1/s})}{n}\right)^{n/\log(n^{1/s})} \leq \exp(-1)
$$
and hence
$$
\gamma_{n}^{n} \leq \exp(- \log(n^{1/s})) = n^{-1/s} = n^{1-1/\rho}.
$$
This together with \eqref{eq:last_1}, \eqref{eq:last_2} and \eqref{eq:last_3} implies that, for all sufficiently large $n$,
$$
f(n,\rho) \leq n^{1-1/\rho}  \sqrt{\frac{1-\rho}{\rho}} \left[(1/\rho - 1){\log(n)^{2}}\right]^{1/\rho -2},
$$
This proves \eqref{eq:bounds_rho_low_2}.

%, since $\Phi^{-1}(1-y)=-\Phi^{-1}(y)$, the change of variables $x=1-y$ yields
%\begin{align}
%\int\limits_{0}^{\gamma_{n}} x^{n} \left[\phi\left(  \Phi^{-1}(x)\right)\right]^{1/s-1}dx= \int\limits_{1-\gamma_{n}}^{1} (1-y)^{n} \left[\phi\left(  \Phi^{-1}(y)\right)\right]^{1/s-1}dy
%\end{align}

\end{proof}
The case $\rho=1/2$ is excluded from Theorem \ref{cotainf}, since it is well known, see \cite{Owen} for example, that $f(n,1/2)=1/(n+1)$. Theorem \ref{cotainf} implies that, for $\rho >0$ and large $n$, $f(n,\rho)$ behaves essentially like $n^{1-1/\rho}$, except for logarithmic factors.

Recall that the probability that a $k$-homogeneous polynomial, $k\geq 2$, attains a relative maximum at a given vertex of the $n$-dimensional simplex is given by the probability that the standard normal random variables $X_{1}(a),\dots,X_{n}(a)$ are all positive.
By \eqref{eq:rho_n}, the covariance matrix of the multivariate normal vector $\mathbf{X(a)}=(X_1(a),\ldots ,X_n(a))$ is given
\[
A=\left(
  \begin{array}{ccccc}
    1 & \rho_n & \rho_n & \ldots & \rho_n \\
    \rho_n & 1 & \rho_n & \ldots & \rho_n \\
    \vdots & \vdots & \ddots & \ldots & \rho_n \\
    \rho_n & \rho_n & \rho_n & \ddots &  \rho_n\\
     \rho_n & \rho_n & \rho_n & \ldots & 1 \\
  \end{array}
\right),
\]
where $\rho_{n}=(nk+(k-1))/(n(k+1)+(k-1))$ satisfies $\rho_{n} >1/2$ and $\rho_{n} \to k/(k+1)$.
Thus, using the notation of Theorem \ref{cotainf}, for $k\geq 2$, the probability that a $k$-homogeneous polynomial attain a relative maximum at a given vertex of the $n$-dimensional simplex is given by $f(n,\rho_{n})$. By Theorem \ref{cotainf} for large enough $n$, we have
$$
\frac{c}{\sqrt{\log(n)}}\leq \frac{f(n,\rho_{n})}{n^{1/\rho_{n}-1}}\leq c
$$
where $c>0$ is a universal constant.

\section{Attaining maxima as $n$ grows}\label{AM}

In this section we will consider the probability of a $k$-homogeneous ($k\geq 2$) polynomial attaining relative maxima at {\it any} vertex of the simplex. Denote by $A_i$ the set of polynomials attaining a relative
maximum at the $i^{\text{th}}$ vertex, and by $W(A_i)$ the measure of this set. We know ---because of rotation-invariance--- that $W(A_i)=W(A_j)=f(n,\rho_{n})$ for all $i, j$.
Note that if the $A_i$'s were independent events (they are not), the probability of attaining a relative maximum at {\it some} vertex of the simplex would be
$$
W\left(\bigcup_{i=1}^{n+1}A_i\right)=1 - W\left(\bigcap_{i=1}^{n+1}A_i^c \right)=1 - W(A_i^c)^{n+1}=1 - (1 - f(n,\rho_{n}))^{n+1}
$$
We know that for large enough $n$, $f(n,\rho_{n}) \geq \frac{\beta_n}{n+1}$, where
$$
\beta_n=c \frac{n^{\frac{-n}{nk+(k-1)}}}{\sqrt{\log n}}(n+1).
$$
But we have $\beta_n \longrightarrow \infty$, and $\frac{\beta_n}{n+1} \longrightarrow 0$, so
$$
1-(1-f(n,\rho_{n}))^{n+1} \geq 1-\left(1-\frac{\beta_n}{n+1}\right)^{n+1} \longrightarrow 1.
$$

Since the $A_i$'s are non-independent events, in order to bound the difference between $1 - (1 - f(n,\rho_{n}))^{n+1}$ and $W\left(\bigcup_{i=1}^{n+1}A_i\right)$ we will
consider the $(n+1)n \times (n+1)n$ matrices
\[
\Sigma_0=\left(
  \begin{array}{ccccc}
    A & 0 & 0 & \ldots & 0 \\
   0 & A & 0 & \ldots & 0 \\
    \vdots & \vdots & \ddots & \ldots & 0 \\
   0 & 0 & 0 & \ddots &  0\\
     0 &0 & 0 & \ldots & A \\
  \end{array}
\right)
\]
and
\[
\Sigma_\ve=\left(
  \begin{array}{ccccc}
    A & M_{\ve} & M_{\ve} & \ldots & M_{\ve} \\
    M_{\ve} & A & M_{\ve} & \ldots & M_{\ve} \\
    \vdots & \vdots & \ddots & \ldots & M_{\ve} \\
    M_{\ve} & M_{\ve} & M_{\ve}  & \ddots &  M_{\ve}\\
     M_{\ve} & M_{\ve} & M_{\ve} & \ldots & A \\
  \end{array}
\right)
\]
where $A$ is the $n\times n$ covariance matrix of $\mathbf{X(a)}$, and $M_{\ve}$ is the matrix whose $(i,j)$-th component is $\langle X_i(a), X_j(b) \rangle$, where $a$ and $b$ are
different vertices of the simplex. Recall that each component of $M_{\ve}$ is bounded in absolute value by $\ve=\frac{1}{n^{k-2}} \frac{1}{2k-1} \left(\frac{1}{n} + (k-1)\right)$.
We need the following lemma.
\begin{lemma}\label{Ainverse}
The inverse of $A$ is
\[
B=\left(
  \begin{array}{ccccc}
    \alpha & \beta & \beta & \ldots & \beta \\
    \beta & \alpha & \beta & \ldots & \beta \\
    \vdots & \vdots & \ddots & \ldots & \beta \\
    \beta & \beta & \beta & \ddots &  \beta\\
     \beta & \beta & \beta & \ldots & \alpha \\
  \end{array}
\right)
\]
where $\alpha$ and $\beta$ are such that, as $n \longrightarrow \infty$, $\alpha \longrightarrow k+1$ and $(n-1)|\beta| \longrightarrow k+1$.
\end{lemma}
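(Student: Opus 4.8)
The plan is to exploit the equicorrelation structure of $A$. Write
$$
A = (1-\rho_n)\,I_n + \rho_n\,J_n,
$$
where $I_n$ is the $n\times n$ identity and $J_n$ is the $n\times n$ all-ones matrix. Since $J_n^2 = nJ_n$, it is natural to seek the inverse among matrices of the same shape, $B = aI_n + bJ_n$. Expanding $AB = (1-\rho_n)a\,I_n + \bigl[(1-\rho_n)b + \rho_n a + n\rho_n b\bigr]J_n$ and matching with $I_n$ forces $(1-\rho_n)a = 1$ and $(1-\rho_n)b + \rho_n a + n\rho_n b = 0$, i.e.
$$
a = \frac{1}{1-\rho_n}, \qquad b = \frac{-\rho_n}{(1-\rho_n)\bigl(1+(n-1)\rho_n\bigr)}.
$$
These are well defined because $\rho_n \in (1/2,1)$ forces $1-\rho_n>0$ and $1+(n-1)\rho_n>0$, so $A$ is invertible and $A^{-1}=B$. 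A matrix of the form $aI_n+bJ_n$ has diagonal entries $a+b$ and off-diagonal entries $b$, so $B$ has exactly the claimed shape with $\alpha := a+b$ and $\beta := b$.

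Next I would substitute $\rho_n = (nk+(k-1))/(n(k+1)+(k-1))$. From the definition one gets immediately
$$
1-\rho_n = \frac{n}{n(k+1)+(k-1)}, \qquad \text{hence} \qquad a = \frac{1}{1-\rho_n} = \frac{n(k+1)+(k-1)}{n} = (k+1) + \frac{k-1}{n}.
$$
The one computation worth doing with care is the simplification of $1+(n-1)\rho_n$; a short expansion of the numerator collapses everything and gives
$$
1+(n-1)\rho_n = \frac{nk(n+1)}{n(k+1)+(k-1)},
$$
whence $\beta = b = \dfrac{-(nk+(k-1))\bigl(n(k+1)+(k-1)\bigr)}{n^2 k\,(n+1)}$.

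Finally I would read off the asymptotics. From $a = (k+1) + (k-1)/n$ we get $a\to k+1$. From the closed form for $\beta$,
$$
(n-1)\beta = \frac{-(n-1)\bigl(nk+(k-1)\bigr)\bigl(n(k+1)+(k-1)\bigr)}{n^2 k\,(n+1)} \xrightarrow[n\to\infty]{} -(k+1),
$$
since numerator and denominator are both of degree $3$ in $n$ with leading coefficients $-k(k+1)$ and $k$ respectively. In particular $\beta\to 0$, so $\alpha = a+\beta \to k+1$ and $(n-1)|\beta|\to k+1$, as claimed. There is no real obstacle here: the argument is elementary linear algebra, and the only points demanding attention are recording invertibility of $A$ (free from $\rho_n\in(1/2,1)$) and the algebraic identity for $1+(n-1)\rho_n$, on which the whole limit computation rests.
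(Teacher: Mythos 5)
Your proposal is correct and takes essentially the same route as the paper: both posit an inverse of the same equicorrelated shape and solve the resulting pair of linear equations, your decomposition $A=(1-\rho_n)I_n+\rho_n J_n$ with $J_n^2=nJ_n$ being just a tidier packaging of the paper's entry-wise equations $\alpha+(n-1)\rho_n\beta=1$ and $\rho_n\alpha+[(n-2)\rho_n+1]\beta=0$, and your closed forms for $\alpha=a+b$ and $\beta=b$ agree with the paper's expressions after expansion. You additionally verify invertibility and carry out the limits $\alpha\to k+1$ and $(n-1)|\beta|\to k+1$ explicitly, which the paper states but leaves implicit.
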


\begin{proof}
For $AB=I$ we need
\begin{align*}
\alpha + (n-1)\rho_n\beta =& 1 \\
\text{and }\quad \rho_n\alpha + \left[(n-2)\rho_n+1\right]\beta =& 0.
\end{align*}
Thus
\begin{align*}
\alpha =& \frac{(n-2)\rho_n + 1}{(n-2)\rho_n + 1 - \rho_n^2 (n-1)} \\
\text{and }\quad \beta =& \frac{-\rho_n}{(n-2)\rho_n + 1 - \rho_n^2 (n-1)},
\end{align*}
which, for our value of $\rho_n$ is
\begin{align*}
\alpha =& \frac{n^3(k^2+k)-n^2(3k+1)-n(k^2-2k+3)+2k-2}{n^3k-n^2(k^2+k+1)+2n(k+1)+k^2-1} \\
\text{and }\quad \beta =& \frac{n^2(k^2+k)+n(2k^2-k-1)+(k-1)^2}{-n^3k-n^2k}.
\end{align*}
\end{proof}

We now have the following.
\begin{theorem}\label{T}
The probability that a $k$-homogeneous polynomial $P:\zR^n \longrightarrow \zR$ of degree $k > 4$ attains a relative maximum at a vertex of the $n$-dimensional simplex tends to one
as the dimension $n$ tends to infinity.
\end{theorem}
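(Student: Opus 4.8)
The plan is to compare the true, weakly dependent situation, governed by the covariance matrix $\Sigma_\ve$, with the fictitious independent one, governed by $\Sigma_0$, using the total-variation estimate of Devroye, Mehrabian and Reddad \cite{DMR}. The point is that for $k>4$ the Gaussian laws $N(0,\Sigma_0)$ and $N(0,\Sigma_\ve)$ become asymptotically indistinguishable, and hence assign asymptotically the same probability to the single event ``at least one block of $n$ coordinates is coordinatewise positive'', which under $N(0,\Sigma_\ve)$ is exactly $W\big(\bigcup_{i=1}^{n+1}A_i\big)$ and under $N(0,\Sigma_0)$ is $1-(1-f(n,\rho_n))^{n+1}$.

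More precisely, let $Y=(\mathbf X(a_1),\dots,\mathbf X(a_{n+1}))\in\zR^{(n+1)n}$ be the Gaussian vector of normalized directional derivatives at the $n+1$ vertices; by the computations of Section \ref{OP} its covariance matrix is $\Sigma_\ve$, whose diagonal blocks are $A$ and whose entries outside the diagonal blocks are at most $\ve=\frac{1}{(2k-1)n^{k-2}}\big(\tfrac1n+k-1\big)$ in absolute value. Let $E\subseteq\zR^{(n+1)n}$ be the set of vectors at least one of whose $n$-coordinate blocks is coordinatewise positive, so that $P_{N(0,\Sigma_\ve)}(Y\in E)=W\big(\bigcup_{i=1}^{n+1}A_i\big)$ while, the blocks being independent under $N(0,\Sigma_0)$ and each coordinatewise positive with probability $f(n,\rho_n)$, $P_{N(0,\Sigma_0)}(Y\in E)=1-(1-f(n,\rho_n))^{n+1}$. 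This last quantity tends to $1$ --- the ``independent heuristic'' already carried out above, using only $f(n,\rho_n)\ge c\,n^{1-1/\rho_n}/\sqrt{\log n}$ from Theorem \ref{cotainf} together with $(n+1)\,n^{1-1/\rho_n}\to\infty$. Since $E$ is one fixed event,
\[
\Big|\,W\Big(\bigcup_{i=1}^{n+1}A_i\Big)-\big(1-(1-f(n,\rho_n))^{n+1}\big)\Big|\;\le\; d_{TV}\big(N(0,\Sigma_0),N(0,\Sigma_\ve)\big),
\]
so it remains to show the total-variation distance on the right tends to $0$.

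For that I would apply \cite{DMR}: there is a universal constant $C>0$ with $d_{TV}\big(N(0,\Sigma_0),N(0,\Sigma_\ve)\big)\le C\,\big\|I-\Sigma_0^{-1/2}\Sigma_\ve\Sigma_0^{-1/2}\big\|_F$ (for $n$ large $\Sigma_\ve$ is positive definite, being an $O(n^{4-k})$ operator-norm perturbation of $\Sigma_0$, whose smallest eigenvalue is $1-\rho_n\to 1/(k+1)$). Since $\Sigma_0=\mathrm{diag}(A,\dots,A)$ and the diagonal blocks of $\Sigma_\ve$ are $A$, the matrix $I-\Sigma_0^{-1/2}\Sigma_\ve\Sigma_0^{-1/2}$ has vanishing diagonal blocks and its $(i,j)$-th off-diagonal block is $-A^{-1/2}(\Sigma_\ve)_{ij}A^{-1/2}$; bounding each of the $(n+1)n$ off-diagonal blocks by $\|A^{-1/2}(\Sigma_\ve)_{ij}A^{-1/2}\|_F\le\|A^{-1}\|_{\mathrm{op}}\,\|(\Sigma_\ve)_{ij}\|_F\le\|A^{-1}\|_{\mathrm{op}}\,n\ve$ gives
\[
\big\|I-\Sigma_0^{-1/2}\Sigma_\ve\Sigma_0^{-1/2}\big\|_F\;\le\;\sqrt{(n+1)n}\;\|A^{-1}\|_{\mathrm{op}}\;n\ve .
\]
Now $A=(1-\rho_n)I+\rho_n J$ has smallest eigenvalue $1-\rho_n$, so $\|A^{-1}\|_{\mathrm{op}}=1/(1-\rho_n)\to k+1$ stays bounded (equivalently, by Lemma \ref{Ainverse}, $\|A^{-1}\|_{\mathrm{op}}=\alpha-\beta\to k+1$); and $n\ve\le n^{3-k}$ for $n$ large, since $\frac{1}{2k-1}\big(\tfrac1n+k-1\big)<1$. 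Hence for all large $n$,
\[
d_{TV}\big(N(0,\Sigma_0),N(0,\Sigma_\ve)\big)\;\le\;C\sqrt{2}\,(k+2)\,n^{4-k},
\]
which tends to $0$ exactly when $k>4$. Combining with the previous display, $W\big(\bigcup_{i=1}^{n+1}A_i\big)\ge 1-(1-f(n,\rho_n))^{n+1}-C\sqrt2\,(k+2)\,n^{4-k}\to 1$, and since this probability is at most $1$ the theorem follows.

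The step I expect to be the main obstacle is the Frobenius-norm estimate. It is essential to absorb the $A^{-1}$ factor in \emph{operator} norm rather than Frobenius norm --- the latter would cost an extra $\sqrt n$ and push the degree threshold above $k=4$ --- which is precisely why one needs $\rho_n$ to remain bounded away from $1$ (here $\rho_n\to k/(k+1)<1$). One must also carefully retain the factor $\sqrt{(n+1)n}\approx n$ coming from the number of off-diagonal blocks: it is exactly this factor that turns the size $n^{3-k}$ of a single block contribution into the overall $n^{4-k}$ that dictates the hypothesis $k>4$. Everything else --- the positive definiteness of $\Sigma_\ve$ for large $n$, and the convergence $1-(1-f(n,\rho_n))^{n+1}\to 1$ of the independent model --- is routine or has already been established in the excerpt.
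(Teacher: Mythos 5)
Your proposal is correct and follows essentially the same route as the paper: compare $N(0,\Sigma_\ve)$ with the block-diagonal $N(0,\Sigma_0)$ via the Devroye--Mehrabian--Reddad total-variation bound, estimate the relevant Frobenius norm by (number of off-diagonal blocks)$\times$(size $n\ve\sim n^{3-k}$ of each block contribution), and combine with the independent-model computation $1-(1-f(n,\rho_n))^{n+1}\to 1$, arriving at the same $n^{4-k}$ rate and threshold $k>4$. The only (harmless) difference is technical: you control the $A^{-1}$ factor through $\Vert A^{-1}\Vert_{\mathrm{op}}=1/(1-\rho_n)\to k+1$ in the symmetrized form of the bound, whereas the paper computes $B=A^{-1}$ explicitly (Lemma \ref{Ainverse}) and uses the row $\ell_1$-norm $\alpha+(n-1)|\beta|$, both of which stay bounded and yield the same conclusion.
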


\begin{proof}
Consider the $n(n+1)$ random variables
$$
X_1(a_1),\ldots ,X_n(a_1),X_1(a_2), \ldots ,X_n(a_2),\ldots , X_1(a_{n+1}),\ldots , X_n(a_{n+1})
$$
where $a_{1}, a_{2}, \ldots ,a_{n+1}$ are the $n+1$ vertices of the simplex $\Delta$. These are $N(0,1)$ random variables with covariance matrix $\Sigma_\ve$. If the events $A_1, \ldots , A_{n+1}$ were
independent, the covariance matrix would be $\Sigma_0$. We denote by $G_\ve$ the gaussian measure on $\zR^{n(n+1)}$ with covariance matrix $\Sigma_\ve$ and by $G_0$ the gaussian measure
on $\zR^{n(n+1)}$ whose covariance matrix is $\Sigma_0$. We are interested in the probability of the event $\bigcup_{i=1}^{n+1}A_i$: ``for some vertex $a_{1}, a_{2}, \ldots ,a_{n+1}$,
all $X_1,X_2,\ldots ,X_n$ (corresponding to that vertex) are positive''. We calculate the probability of the complement $\bigcap_{i=1}^{n+1}A_i^c$: ``for all vertices $a_{1}, a_{2}, \ldots ,a_{n+1}$,
some $X_j$ (corresponding to each vertex) is negative''. We do this by integrating the gaussian measure $G_\ve$ of covariance $\Sigma_\ve$ over the subset $\Omega$ of
$$
\zR^{n(n+1)} = \left\{(\mathbf{z}_1,\ldots ,\mathbf{z}_{n+1}) : \mathbf{z}_i=(z_{i1},\ldots ,z_{in)}) \in \zR^{n} \right\}
$$
given by
$$
\Omega = \left\{(\mathbf{z}_1,\ldots ,\mathbf{z}_{n+1}) \in \zR^{n(n+1)} : \text{ for all }i\leq n+1 \text{ there is a }j\leq n : z_{ij}\leq 0 \right\},
$$
and analogously for $G_0$. Thus
\begin{align*}
\left|1 - (1 - f(n,\rho_{n}))^{n+1} - W\left(\bigcup_{i=1}^{n+1}A_i\right)\right| &= \left|1 - (1 - f(n,\rho_{n}))^{n+1} - 1 + W\left(\bigcap_{i=1}^{n+1}A_i^c \right)\right| \\
                                                                        &= \left|W\left(\bigcap_{i=1}^{n+1}A_i^c \right) - (1 - f(n,\rho_{n}))^{n+1}\right| \\
                                                                        &= \left| \int_\Omega dG_\ve - dG_0 \right|.
\end{align*}
This is bounded by the total variation distance between the measures $G_\ve$ and $G_0$.
By the Devroye-Mehrabian-Reddad theorem \cite{DMR} this distance will in turn be bounded by
$$
\frac{3}{2}\left\Vert \Sigma_\ve \Sigma_0^{-1} - I \right\Vert_F,
$$
where $\Vert \cdot \Vert_F$ is the Frobenius norm.
Thus consider
\[
\Sigma_\ve \Sigma_0^{-1}-I=\left(
  \begin{array}{ccccc}
    0 & M_{\ve}B & M_{\ve}B & \ldots & M_{\ve}B \\
    M_{\ve}B & 0 & M_{\ve}B & \ldots & M_{\ve}B \\
    \vdots & \vdots & \ddots & \ldots & M_{\ve}B \\
    M_{\ve}B & M_{\ve}B & M_{\ve}B  & \ddots &  M_{\ve}B\\
     M_{\ve}B & M_{\ve}B & M_{\ve}B & \ldots & 0 \\
  \end{array}
\right).
\]
Each of the $(n+1)^2-(n+1)$ non-zero blocks is an $n\times n$ matrix whose entries are inner products between a row of $M_\ve$ and a column of $B$. Thus each of these entries is bounded
by $\ve \Vert (\alpha , \beta , \ldots , \beta )\Vert_1$. We then have
$$
\Vert \Sigma_\ve^{-1} \Sigma_0 - I \Vert_F \leq ((n+1)^2-(n+1))n^2 \ve^2 \Vert (\alpha , \beta , \ldots , \beta )\Vert_1^2 < (n+1)^2 n^2 \ve^2 (\alpha + (n-1)|\beta|)^2,
$$
which for large $n$ is bounded by
$$
C n^4 \ve^2 (k+1)^2 \leq C^{\prime} \frac{(k+1)^2}{n^{2k-8}},
$$
where $C$ and $C^{\prime}$ are positive constants.
Thus if $k\geq 5$, for large $n$,
\begin{align*}
\left|1 - W\left(\bigcup_{i=1}^{n+1}A_i\right)\right| &\leq \left|(1 - f(n,\rho_{n}))^{n+1}\right| + \left|1 - (1 - f(n,\rho_{n}))^{n+1} - W\left(\bigcup_{i=1}^{n+1}A_i\right)\right| \\
                                                      &\leq \left|(1 - f(n,\rho_{n}))^{n+1}\right| + C^{\prime} \frac{(k+1)^2}{n^{2k-8}} \to 0.
\end{align*}
%and for degree $k\geq 5$, this difference tends to zero as the dimension $n$ grows.
This finishes the proof of the theorem.
\end{proof}

\bibliographystyle{plain}
%\bibliography{refs}

\end{document}